\documentclass[11pt]{article}
\usepackage[a4paper,margin=1in]{geometry}
\usepackage{amsmath,amssymb,amsthm}
\usepackage{mathrsfs}
\usepackage{graphicx}
\usepackage{hyperref}
\usepackage{enumitem}
\usepackage{tikz}
\usepackage{float}
\hypersetup{colorlinks=true,linkcolor=blue,citecolor=blue,urlcolor=blue}
\newtheorem{theorem}{Theorem}
\newtheorem{proposition}{Proposition}
\newtheorem{remark}{Remark}
\title{Analytic Continuation of the Frenet Curvature 
Form of an Analytic Plane Curve}
\author{A. G. Nuramatov}
\date{}
\begin{document}
\maketitle
\begin{abstract}
Fermi coordinates provide a natural local coordinate system near a plane curve. Nevertheless, they 
are not conformal and are not generated directly by the curvature form. The present work asks whether 
a real analytic plane curve determines a distinguished conformal coordinate net generated by its curvature.
Let
\[
\omega_F=-k\,ds
\]
be the Frenet curvature 1-form of an analytic curve. We show that its holomorphic continuation
\[
\Omega=K(q)\,dq
\]
naturally determines a conformal coordinate net through the reconstruction formula
\[
z_q=\exp\left(i\int\Omega\right).
\]
The holomorphic differential \(\Omega\) is identified with a 
conformal connection differential associated with the coordinate net and
 whose restriction  to the initial curve coincides with the original curvature form.
For a pair of dual conformal nets, the derivative of the conformal transition map admits the geometric representation
\[
z_q=-\overline{\left(\frac{\widetilde\omega}{\omega}\right)},
\]
where \(\omega\) and \(\widetilde\omega\) 
denote the complex curvature coefficients of the two nets.
 Although neither curvature coefficient is holomorphic in general, 
 their normalized conjugate dual ratio is holomorphic. Thus an analytic plane curve determines, 
 through the holomorphic continuation of its curvature form, a distinguished conformal coordinate net 
 in its neighborhood.
\end{abstract}
\section{Introduction}
Classical mathematical physics traditionally employs coordinate systems adapted to geometry. Classical examples include elliptic, parabolic and bipolar coordinate systems, as discussed by Morse and Feshbach \cite{Morse}. In the classical theory, coordinate nets are usually introduced first, and geometric quantities are subsequently computed from them. The present work approaches the problem from the opposite direction.
Fermi coordinates provide a natural local coordinate system near a plane curve. Nevertheless, they are not conformal and are not generated directly by the curvature.
We ask whether a real analytic plane curve determines a distinguished conformal coordinate net generated by its curvature.
Let
\[
\gamma=\gamma(s)
\]
be a real analytic plane curve parametrized by arc length. Its Frenet curvature 1-form is
\[
\omega_F=-k\,ds.
\]
The central idea of the paper is to regard the Frenet curvature data  as the primary geometric object.
The corresponding holomorphic  continuation is represented by the   holomorphic differential
\[
\Omega=K(q)\,dq.
\]
We show that this continuation determines a conformal coordinate net through the reconstruction formula
\[
z_q=\exp\left(i\int\Omega\right).
\]
This formula may be viewed as a two-dimensional analogue of the classical reconstruction formula
\[
T(s)=\exp\left(i\int k\,ds\right),
\]
which reconstructs the tangent vector of a plane curve from its connection 1-form. The resulting conformal net is distinguished by the analytic continuation of the Frenet curvature 1-form.
 The elliptic coordinate net provides an explicit realization of the construction.
\section{Connection on a Curve}
Let
\[
\gamma=\gamma(s)
\]
be a regular plane curve parametrized by arc length. Let \(T,N\) denote the unit tangent and unit normal
 vectors. The Frenet equations are
\[
dT=kN\,ds,\qquad dN=-kT\,ds.
\]
Equivalently,
\[
d
\begin{pmatrix}
T\\
N
\end{pmatrix}
=
\begin{pmatrix}
0&kds\\
-kds&0
\end{pmatrix}
\begin{pmatrix}
T\\
N
\end{pmatrix}.
\]
Let \(F=(e_1,e_2)\) be an orthonormal frame. Cartan's structure equations give
\[
de_1=\omega^2_1 e_2,\qquad de_2=\omega^1_2 e_1,
\]
where
\[
\omega^2_1=-\omega^1_2.
\]
Comparing with Frenet's equations, we obtain
\[
\boxed{k\,ds=-\omega^1_2.}
\]
Thus the Frenet curvature 1-form is identified with the Cartan connection 1-form of the moving orthonormal frame. Accordingly, throughout the present work the curvature of the curve is viewed as a connection 1-form rather than merely as a scalar function.
The tangent vector is reconstructed by
\[
\boxed{T(s)=\exp\left(i\int k\,ds\right).}
\]
This one-dimensional reconstruction formula will serve as a model 
for the conformal reconstruction developed below.
\section{Conformal Coordinate Nets}

Let
\[
q=q^1+iq^2
\]
be a conformal coordinate. The metric has the form
\[
ds^2
=
H^2\bigl((dq^1)^2+(dq^2)^2\bigr),
\]
where $H(q^1,q^2)$ is the conformal factor.

The orthonormal coframe is
\[
\theta^1=H\,dq^1,
\qquad
\theta^2=H\,dq^2.
\]

Cartan's structure equations imply
\[
\omega^1_{\;2}
=
\omega^1_{\;21}\theta^1
+
\omega^1_{\;22}\theta^2,
\]
where
\[
\omega^1_{\;21}
=
\frac{H_{q^2}}{H^2},
\qquad
\omega^1_{\;22}
=
-\frac{H_{q^1}}{H^2}.
\]

We define the complex curvature coefficient by
\[
\omega
=
-\bigl(\omega^1_{\;21}-i\omega^1_{\;22}\bigr).
\]

\textbf{Lemma 1.}
The complex curvature coefficient admits the logarithmic representation
\[
\omega
=
-\frac{2i}{H}\partial_q\log H,
\]
where
\[
\partial_q
=
\frac12(\partial_{q^1}-i\partial_{q^2}).
\]

\begin{proof}
Substituting the expressions for
$\omega^1_{\;21}$ and $\omega^1_{\;22}$ gives
\[
\omega
=
-
\frac{H_{q^2}+iH_{q^1}}{H^2}
=
-
\frac{i}{H^2}
(H_{q^1}-iH_{q^2}).
\]
Since
\[
\partial_qH
=
\frac12(H_{q^1}-iH_{q^2}),
\]
we obtain
\[
\omega
=
-
\frac{2i}{H^2}\partial_qH
=
-\frac{2i}{H}\partial_q\log H.
\]
\end{proof}

Thus the complex curvature coefficient is naturally expressed as the logarithmic derivative of the 
conformal factor. This motivates the introduction of the conformal connection differential
\[
\Omega
=
H\omega\,dq
=
-2i\,\partial_q\log H\,dq.
\]

In general, $\omega$ is not holomorphic. The differential
\[
\Omega=H\omega\,dq
\]
will play the role of the holomorphic continuation of the Frenet curvature form. The following sections show that
\[
\Omega
=
K(q)\,dq,
\]
where $K(q)$ is the holomorphic continuation of the curvature data.
\section{Dual Conformal Nets}
Let
\[
z=z(q)
\]
be a conformal transformation. For convenience we call the conformal net associated with the inverse mapping
\[
q=q(z)
\]
the dual conformal net. The two nets are symmetric under the interchange
\[
q\leftrightarrow z,
\]
and their conformal factors satisfy
\[
\boxed{\widetilde H=H^{-1}.}
\]
Let
\[
\omega=-(\omega^1_{21}-i\omega^1_{22}),\qquad \widetilde\omega=-(\widetilde\omega^1_{21}-i\widetilde\omega^1_{22})
\]
denote the complex curvature coefficients of the two dual nets.
\begin{proposition}[Dual curvature ratio]
The derivative of the conformal transformation satisfies
\[
\boxed{z_q=-\overline{\left(\frac{\widetilde\omega}{\omega}\right)}.}
\]
\end{proposition}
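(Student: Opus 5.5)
The plan is to compute both curvature coefficients explicitly from Lemma 1, writing everything in terms of the holomorphic function $z_q$, and then take their ratio. First I fix the normalization implicit in the statement. The ambient metric is the Euclidean one, $ds^2=|dz|^2$. Pulled back to the $q$-coordinates it becomes $|z_q|^2|dq|^2$, so $H=|z_q|$. Symmetrically, the dual net has conformal factor $\widetilde H=|q_z|$, where $q=q(z)$ is the inverse map. Since $q_z=1/z_q$, this gives $\widetilde H=H^{-1}$, consistent with the boxed relation above. Both coefficients are evaluated at corresponding points $z=z(q)$. We assume $z_{qq}\neq0$ there, so that $\omega\neq0$ and the ratio makes sense.

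Next I apply Lemma 1 to each net. For the $q$-net, $\log H=\tfrac12(\log z_q+\overline{\log z_q})$. The antiholomorphic part is killed by $\partial_q$, so
\[
\partial_q\log H=\frac{z_{qq}}{2z_q},\qquad
\omega=-\frac{2i}{H}\cdot\frac{z_{qq}}{2z_q}=-\frac{i}{H}\,\frac{z_{qq}}{z_q}.
\]
For the dual net, Lemma 1 with the roles of $q$ and $z$ interchanged gives $\widetilde\omega=-\frac{2i}{\widetilde H}\partial_z\log\widetilde H$. Here $\widetilde H^{-1}=H$ and $\log\widetilde H=-\log H$.

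The one step needing care is converting $\partial_z$ into $\partial_q$. Because $q(z)$ is holomorphic, the chain rule for Wirtinger derivatives gives $\partial_z=q_z\,\partial_q=z_q^{-1}\partial_q$ acting on any function of $(q,\bar q)$; there is no $\bar\partial$ contribution. Hence
\[
\partial_z\log\widetilde H=-\frac{1}{z_q}\cdot\frac{z_{qq}}{2z_q},\qquad
\widetilde\omega=-2iH\left(-\frac{z_{qq}}{2z_q^2}\right)=iH\,\frac{z_{qq}}{z_q^{2}}.
\]

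Finally I form the ratio:
\[
\frac{\widetilde\omega}{\omega}=\frac{iH z_{qq}/z_q^2}{-i z_{qq}/(H z_q)}=-\frac{H^2}{z_q}=-\frac{z_q\overline{z_q}}{z_q}=-\overline{z_q}.
\]
Taking complex conjugates yields $z_q=-\overline{(\widetilde\omega/\omega)}$.

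The main obstacle is bookkeeping rather than any substantive difficulty. One must make sure that $\widetilde\omega$ is defined with the same sign conventions as $\omega$, with $z$ playing the role of the coordinate, so that Lemma 1 applies verbatim. One must also handle the chain rule for $\partial_z$ correctly. A sign slip at either point would flip the overall sign of the identity, so I would check the result on a simple example such as $z=q^2$.
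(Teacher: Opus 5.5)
Your proof is correct and follows the same route as the paper: apply Lemma 1 to both nets, then use $\widetilde H=H^{-1}$ and the chain rule $\partial_z=z_q^{-1}\partial_q$ to get $\widetilde\omega/\omega=-H^2/z_q$, and finally substitute $H^2=z_q\overline{z_q}$. The explicit evaluation $\partial_q\log H=z_{qq}/(2z_q)$ is not strictly needed, since that factor cancels in the ratio, but it usefully exposes the nondegeneracy assumption $z_{qq}\neq0$ (i.e.\ $\omega\neq0$), which the paper leaves implicit.
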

\begin{proof}.
Using the logarithmic representation  obtained in Lemma 1,
\[
\omega=-\frac{2i}{H}\partial_q\log H,
\qquad
\widetilde{\omega}
=-\frac{2i}{\widetilde H}\partial_z\log\widetilde H,
\]
together with
\[
\widetilde H=H^{-1},
\qquad
\partial_z=z_q^{-1}\partial_q,
\]
we obtain
\[
\frac{\widetilde\omega}{\omega}
=
-\frac{H^2}{z_q}.
\]
Since
\[
H^2=|z_q|^2=z_q\,\overline{z_q},
\]
it follows that
\[
\frac{\widetilde\omega}{\omega}
=
-\overline{z_q},
\]
or equivalently
\[
z_q
=
-\overline{\left(\frac{\widetilde\omega}{\omega}\right)}.
\]
\end{proof}
\begin{remark}
Neither \(\omega\) nor \(\widetilde\omega\) is holomorphic in general. Nevertheless,
\[
\boxed{-\overline{\left(\frac{\widetilde\omega}{\omega}\right)}}
\]
is holomorphic, since it coincides with the derivative of the conformal transition map. Thus holomorphicity emerges from the relation between two geometric curvature fields rather than from either field separately.
\end{remark}
\section{The Elliptic Coordinate Net}

The confocal elliptic coordinate system provides an explicit realization of the general construction.
It is generated by the holomorphic mapping
\[
z=c\cos q,
\qquad
q=q^1+i q^2,
\]
where \(c>0\) is the focal distance.

Writing \(z=x+iy\), we obtain
\[
x=c\cos q^1\cosh q^2,
\qquad
y=-c\sin q^1\sinh q^2.
\]

The derivative is
\[
z_q=-c\sin q,
\]
and therefore the conformal factor is
\[
H=|z_q|
=c|\sin q|.
\]

Using the identity
\[
|\sin(q^1+i q^2)|^2
=
\frac{\cosh2q^2-\cos2q^1}{2},
\]
we obtain
\[
H^2
=
\frac{c^2}{2}
\left(
\cosh2q^2-\cos2q^1
\right).
\]

Hence the metric takes the conformal form
\[
ds^2
=
H^2
\left(
(dq^1)^2+(dq^2)^2
\right).
\]

The orthonormal coframe is
\[
\theta^1=H\,dq^1,
\qquad
\theta^2=H\,dq^2.
\]

Differentiating the identity for \(H^2\), we obtain
\[
2HH_{q^1}
=
c^2\sin2q^1,
\qquad
2HH_{q^2}
=
c^2\sinh2q^2.
\]

Therefore,
\[
\omega^{1}{}_{21}
=
\frac{H_{q^2}}{H^2}
=
\frac{c^2\sinh2q^2}{2H^3},
\]
and
\[
\omega^{1}{}_{22}
=
-\frac{H_{q^1}}{H^2}
=
-\frac{c^2\sin2q^1}{2H^3}.
\]

Introducing the complex curvature coefficient
\[
\omega
=
-\left(
\omega^{1}{}_{21}
-
i\omega^{1}{}_{22}
\right),
\]
we obtain
\[
\omega
=
-\frac{c^2}{2H^3}
\left(
\sinh2q^2
+
i\sin2q^1
\right).
\]

On the other hand,
 a direct computation gives
\[
\omega
=
2i\,\partial_q(H^{-1})=-\frac{2i}{H}\,\partial_q\log H,
\]
where
\[
\partial_q
=
\frac12
\left(
\partial_{q^1}
-
i\partial_{q^2}
\right).
\]

Thus the two real curvature coefficients combine naturally into a single complex curvature coefficient. The conformal connection differential
\[
\Omega=H\omega\,dq
\]
is holomorphic, and the elliptic coordinate net provides an explicit realization of the conformal reconstruction formalism.
\section{Analytic Continuation of Curvature}
The elliptic example suggests a natural interpretation of the conformal connection differential.
Let
\[
\gamma=\{q^2=0\}
\]
be a real analytic curve. On the initial curve, the conformal connection differential restricts to 
\[
\Omega|_{q^2=0}=k(q^1)\,dq^1,
\]
where
\[
k(q^1)=\sum_{n=0}^{\infty}a_n(q^1)^n
\]
is a real analytic function. Replacing the real coordinate \(q^1\) by
\[
q=q^1+iq^2,
\]
we obtain the holomorphic continuation
\[
\boxed{K(q)=\sum_{n=0}^{\infty}a_n q^n.}
\]
The corresponding holomorphic differential is
\[
\boxed{\Omega=K(q)\,dq.}
\]
Thus the Frenet curvature 1-form gives rise to a holomorphic connection differential in a neighborhood of the curve.
For a conformal metric
\[
ds^2=H^2|dq|^2,
\]
the complex curvature coefficient is
\[
\omega=-(\omega^1_{21}-i\omega^1_{22}).
\]
The conformal connection differential is
\[
\boxed{\Omega=H\omega\,dq.}
\]
Hence
\[
\boxed{K(q)=H(q)\omega(q).}
\]
On the initial curve \(q^2=0\) we choose the arc-length normalization
\[
H=1.
\]
Therefore
\[
\Omega|_{q^2=0}=k\,dq^1=k\,ds.
\]
Thus the holomorphic differential
\[
\boxed{\Omega=K(q)\,dq=H\omega\,dq}
\]
is the analytic continuation of the Frenet curvature 1-form.
\begin{remark}
The emphasis is local. Questions of global continuation, monodromy, singularities and natural boundaries are beyond the scope of the present work.
\end{remark}
\section{Reconstruction Formula}
Let
\[
z=z(q)
\]
be a conformal coordinate transformation. Its derivative may be written as
\[
z_q=H e^{i\phi},
\]
where \(H>0\) is the conformal factor and \(\phi\) is the local rotation angle. Therefore,
\[
\log z_q=\log H+i\phi,
\]
and
\[
d\log z_q=d\log H+i\,d\phi.
\]
Since \(z_q\) is holomorphic,
\[
u=\log H,\qquad v=\phi
\]
satisfy the Cauchy--Riemann equations
\[
u_{q^1}=v_{q^2},\qquad u_{q^2}=-v_{q^1}.
\]
Equivalently,
\[
d\phi=*\,d\log H,
\]
where \(*\) denotes rotation by \(+\pi/2\) on one-forms. On the other hand,
\[
\omega^1_2=-*\,d\log H.
\]
Hence
\[
d\log z_q=d\log H+i*d\log H.
\]
Using the definition
\[
\Omega=H\omega\,dq,
\]
we obtain
\[
\boxed{d\log z_q=i\Omega.}
\]
Integrating,
\[
\boxed{z_q=\exp\left(i\int\Omega\right).}
\]
Since \(\Omega\) is the analytic continuation of the Frenet curvature 1-form, this formula reconstructs the conformal coordinate net directly from the continued connection data.
\begin{theorem}[Conformal Reconstruction]
Let \(k(s)\) be the curvature of a real analytic plane curve and let \(K(q)\) be its holomorphic continuation. Then the holomorphic differential
\[
\Omega=K(q)\,dq
\]
determines locally a conformal coordinate net through
\[
\boxed{z_q=\exp\left(i\int\Omega\right).}
\]
\end{theorem}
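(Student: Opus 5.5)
Since $k$ is real analytic in $s$, it is given near a base point $s_0$ by a convergent power series $k(s)=\sum a_n (s-s_0)^n$ with some radius $r>0$. Following the section on analytic continuation of curvature, I replace $s$ by $q=q^1+iq^2$ and obtain $K(q)=\sum a_n (q-s_0)^n$, holomorphic on the disc $D=\{|q-s_0|<r\}$, with real coefficients and $K(q^1)=k(q^1)$ on the real axis. The construction is then the following. Fix $q_0=s_0$ and define a primitive $P(q)=\int_{q_0}^{q}K(\zeta)\,d\zeta$. It is path independent on the simply connected disc $D$ by Cauchy's theorem, and it is holomorphic. Then set $w(q)=\exp(iP(q))$, which is holomorphic and nowhere zero on $D$. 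Finally choose a point $z_0=\gamma(s_0)$ and let $z(q)=z_0+\int_{q_0}^{q}w(\zeta)\,d\zeta$. By construction $z_q=w=\exp(i\int\Omega)$, so the displayed formula holds.

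Next I would check that $z$ is a genuine local conformal coordinate and that it reproduces the data. Since $z_q\neq0$ everywhere, the holomorphic inverse function theorem makes $z$ a biholomorphism from a neighbourhood of $q_0$ onto a neighbourhood of $z_0$. The images of the lines $q^1=\mathrm{const}$ and $q^2=\mathrm{const}$ therefore form an orthogonal (conformal) net with conformal factor $H=|z_q|=\exp(-\operatorname{Im}P)$.

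On the real axis, $P$ is real because $K$ has real coefficients. Hence $H=1$ there, which is the arc-length normalization of the section on analytic continuation. The curve $q^1\mapsto z(q^1)$ therefore has unit tangent $T=\exp(i\int k\,ds)$, so it is the curve $\gamma$ itself, up to the rigid motion fixed by $z_0$ and the initial tangent direction. This matches the one-dimensional formula of the section on connections on a curve.

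Then I would verify that the connection differential of this net is $\Omega$. Here I reuse the computation preceding the theorem. From $\log z_q=\log H+i\phi$ and the Cauchy--Riemann equations, together with Lemma 1 and the definition $\Omega=H\omega\,dq$, that computation gives $d\log z_q=i\,H\omega\,dq$. Our construction gives $d\log z_q=iK\,dq$ directly, so $H\omega=K$ and the net's connection differential coincides with $\Omega$. Restricting to $q^2=0$ gives $k\,ds$, the Frenet form.

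Finally, uniqueness (``determines'') comes down to the free constants. The additive constant in $\int\Omega$ multiplies $z_q$ by a constant of the form $e^{i\alpha}$ (a rotation), or by a real scale factor if the constant has an imaginary part. The constant in the second integration is a translation. Fixing $H=1$ on $\gamma$ and the base point and tangent of $\gamma$ removes all of them, so the net is determined up to rigid motions.

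The main obstacle I expect is making rigorous the identity $d\log z_q=i\,H\omega\,dq$ in the earlier computation. The expression $d\log H+i{*}d\log H$ must be shown to equal $2\partial_q\log H\,dq$, which Lemma 1 rewrites as $iH\omega\,dq$, so the sign conventions for $*$ and $\omega^1_2$ need checking. The first thing I would do is verify this directly in the elliptic example, $z=c\cos q$, where $\Omega=-i\cot q\,dq$, so that the sign conventions are pinned down before the general identification is used.
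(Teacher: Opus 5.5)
Your proposal is correct, but it runs in the opposite logical direction from the paper. The paper's argument, the computation in the section on the reconstruction formula, starts from an arbitrary conformal map $z(q)$ and writes $z_q=He^{i\phi}$. Using the Cauchy--Riemann equations together with $\omega^1_{\;2}=-{*}\,d\log H$, it obtains $d\log z_q=iH\omega\,dq$ for every conformal net. The identification $H\omega\,dq=K\,dq$ is then taken from the section on analytic continuation, which simply posits coordinates in which $\gamma=\{q^2=0\}$ and $H=1$ on $\gamma$.

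You instead start from $K$ and build $z$ by two integrations, and you get a genuine local coordinate from the holomorphic inverse function theorem. You then \emph{derive} $H=e^{-\operatorname{Im}P}=1$ on the real axis, and the recovery of $\gamma$ up to a rigid motion, from the reality of $K$ on $\mathbb{R}$. Your route therefore supplies the existence of the adapted net, which the paper tacitly assumes.

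The paper's route supplies the intrinsic identity $d\log z_q=iH\omega\,dq$ for all conformal nets. Combined with the identity theorem (used tacitly when the paper writes $K=H\omega$), this shows that \emph{every} conformal net with $\gamma=\{q^2=0\}$ and $H=1$ there comes from the formula. That is a stronger uniqueness statement than your fixing of integration constants.

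The sign issue you flag is harmless. With $u=\log H$, ${*}dq^1=dq^2$ and ${*}dq^2=-dq^1$, one has $du+i{*}du=(u_{q^1}-iu_{q^2})(dq^1+i\,dq^2)=2\partial_q u\,dq$, which Lemma 1 turns into $iH\omega\,dq$. Your elliptic value $\Omega=-i\cot q\,dq$ agrees with this. However, that net has $H=c|\sin q^1|\neq1$ on $q^2=0$, so it tests the general identity rather than being an output of your construction.
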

\begin{remark}[Orientation ambiguity]
The reconstruction is naturally determined only up to orientation. Replacing
\[
\Omega\longrightarrow-\Omega
\]
yields
\[
z_q\longrightarrow\exp\left(-i\int\Omega\right),
\]
which corresponds to reflection of the conformal net. This is analogous to the classical reconstruction of plane curves.
\end{remark}
\begin{remark}[Euler reconstruction]
For the unit circle,
\[
k\equiv1.
\]
Therefore
\[
T(s)=\exp\left(i\int_0^s k\,d\sigma\right)=e^{is}.
\]
Hence
\[
T(0)=1,\qquad T\!\left(\frac{\pi}{2}\right)=i,\qquad T(\pi)=-1.
\]
In particular,
\[
e^{i\pi}=-1.
\]
Thus Euler's formula may be interpreted geometrically as the reconstruction of the tangent vector from the Frenet curvature 1-form. The conformal reconstruction formula
\[
z_q=\exp\left(i\int\Omega\right)
\]
may be viewed as a two-dimensional complex analogue of Euler's formula.
\end{remark}
\begin{figure}[H]
\centering
\begin{tikzpicture}[scale=2.2,>=stealth]
\draw[->] (-1.4,0) -- (1.4,0) node[right] {$\operatorname{Re}$};
\draw[->] (0,-0.2) -- (0,2.3) node[above] {$\operatorname{Im}$};
\draw (0,1) circle (1);
\coordinate (A) at (0,0);
\coordinate (B) at (1,1);
\coordinate (C) at (0,2);
\draw[->,thick] (A) -- (0.65,0) node[below] {$T(0)=1$};
\draw[->,thick] (B) -- (1,1.65) node[right] {$T(\pi/2)=i$};
\draw[->,thick] (C) -- (-0.65,2) node[left] {$T(\pi)=-1$};
\fill (A) circle (0.02);
\fill (B) circle (0.02);
\fill (C) circle (0.02);
\node[below left] at (A) {$s=0$};
\node[right] at (B) {$s=\pi/2$};
\node[above] at (C) {$s=\pi$};
\fill (0,1) circle (0.015);
\node[left] at (0,1) {$i$};
\end{tikzpicture}
\caption{Reconstruction of the tangent vector \(T(s)=e^{is}\) on the unit circle.}
\label{fig:euler-reconstruction}
\end{figure}
\section{Conclusion}
Classical geometry traditionally begins with coordinate nets adapted to geometry. Cartan geometry, on the other hand, begins with connection forms. In the present work we considered the Frenet curvature 1-form
\[
\omega_F=-k\,ds
\]
of a real analytic plane curve as the primary geometric object. Through analytic continuation this connection
 form gives rise to the holomorphic differential
\[
\Omega=K(q)\,dq=H\omega\,dq.
\]
The resulting differential determines locally a conformal coordinate net through the reconstruction
 formula
\[
z_q=\exp\left(i\int\Omega\right).
\]
Equivalently, the derivative of the conformal transition map admits the geometric representation
\[
z_q=-\overline{\left(\frac{\widetilde\omega}{\omega}\right)}.
\]
Although neither \(\omega\) nor \(\widetilde\omega\) is holomorphic in general, their conjugate normalized 
dual ratio is holomorphic. Thus an analytic plane curve determines, through the holomorphic continuation of its Frenet curvature 1-form, a distinguished conformal coordinate net in a neighborhood of the curve. The present work suggests that, for analytic plane curves, the classical viewpoint based on coordinate nets and Cartan's viewpoint based on connection forms are related through analytic continuation and conformal reconstruction.

\end{document}